\newtheorem{theorem}{Theorem}[section]
\newtheorem{corollary}[theorem]{Corollary}
\theoremstyle{definition}
\newtheorem{remark}[theorem]{Remark}
\def\r{\mathbb R}
\def\c{\mathbb C}
\def\n{\mathbb N}
\def\z{\mathbb Z}
\def\s{\mathbb S}
\newcommand{\T}{\hbox{\bf t}}
\newcommand{\N}{\hbox{\bf n}}
\newcommand{\B}{\hbox{\bf b}}
\newcommand{\C}{\hbox{\bf c}}
\begin{document}
\title{Two classification results for  stationary surfaces  of the least moment of inertia}


\author{Rafael L\'opez}



\keywords{stationary surface; moment of inertia; ruled surface; cyclic surface.}

\subjclass{Primary: 53A10 ; Secondary: 53C42.}


\begin{abstract}
	A surface in  Euclidean space $\r^3$ is said to be an $\alpha$-stationary surface if it is a critical point of the energy $\int_\Sigma|p|^\alpha$, where $\alpha\in\r$.   We prove that all ruled $\alpha$-stationary surfaces are vector planes (for all $\alpha$) and a type of elongated helicoids (for $\alpha=1$).    The second result of classification asserts that if $\alpha\not=-2,-4$, any $\alpha$-stationary surface  foliated by circles must be a rotational surface.  If $\alpha=-4$, the surface is the inversion of a plane, a helicoid, a catenoid or an Riemann minimal example.  If $\alpha=-2$,  we find many non-spherical cyclic $(-2)$-stationary surfaces.
\end{abstract}

\maketitle

\section{Introduction and statement of the results} 

In this paper we investigate surfaces $\Sigma$ in Euclidean space $\r^3$ which are critical points of the energy 
$$E_\alpha[\Sigma]=\int_\Sigma|p|^\alpha\, d\Sigma,$$
where   $\alpha\in\r$ is a parameter and $|p|$  denotes the modulus of a generic point $p\in\Sigma$. Critical points of this energy are called  {\it $\alpha$-stationary surfaces} and they    are characterized by the Euler-Lagrange equation
\begin{equation}\label{eq1}
H(p)=\alpha\frac{\langle N(p),p\rangle}{|p|^2},\quad p\in\Sigma,
\end{equation}
where $H$ and $N$ are the mean curvature and the unit normal vector of $\Sigma$, respectively. We are assuming   that the origin $0\in\r^3$ does not belong to $\Sigma$. In \eqref{eq1}, the mean curvature $H$ is the sum of the principal curvatures of the surface. If $\alpha=0$, then $E_0[\Sigma]$ is simply the area of $\Sigma$ and $0$-stationary surfaces are minimal surfaces. From now, we will discard the case $\alpha=0$. Another interesting case is  $\alpha=2$ because the energy $E_{2}$ represents the moment of inertia of $\Sigma$ with respect to the origin of $\r^3$. 

Stationary surfaces have been recently studied by Dierkes and Huisken in \cite{d4,dh} as an extension  in arbitrary dimensions of an old problem investigated by Euler on planar curves. In Euclidean plane $\r^2$, Euler asked for those planar curves  $\gamma$   of constant density joining two given points of $\r^2$ with least energy $\int_\gamma |\gamma|^\alpha\, ds$: see    \cite[p. 53]{eu}.   Euler found explicit parametrizations  of these curves in polar coordinates (see also \cite{ca,to}). Later,  Mason found all minimizers in the particular case $\alpha=2$ \cite{ma}.  Recently, the author has obtained a variety of results for $\alpha$-stationary surfaces,   also in collaboration with Dierkes. For example, existence of the Plateau problem \cite{dl1}, characterizations of the rotational surfaces \cite{dl2}, a relationship between $\alpha$-stationary surfaces and   minimal surfaces \cite{lo1} or characterizing those stationary surfaces with constant Gauss curvature \cite{lo2}. 

Some examples of $\alpha$-stationary surfaces are the following (see Fig. \ref{fig0}):
\begin{enumerate}
\item Vector planes. A plane of $\r^3$ is an $\alpha$-stationary surface if and only if the plane  contains $0$, that is, it is a vector plane. This holds for any $\alpha\in\r$.  
\item Spheres. The only spheres that are $\alpha$-stationary surfaces are spheres centered at $0$ ($\alpha=-2$) and spheres containing $0$ ($\alpha=-4$).
\end{enumerate}
 
  \begin{figure}[hbtp]
\begin{center}
\includegraphics[width=.3\textwidth]{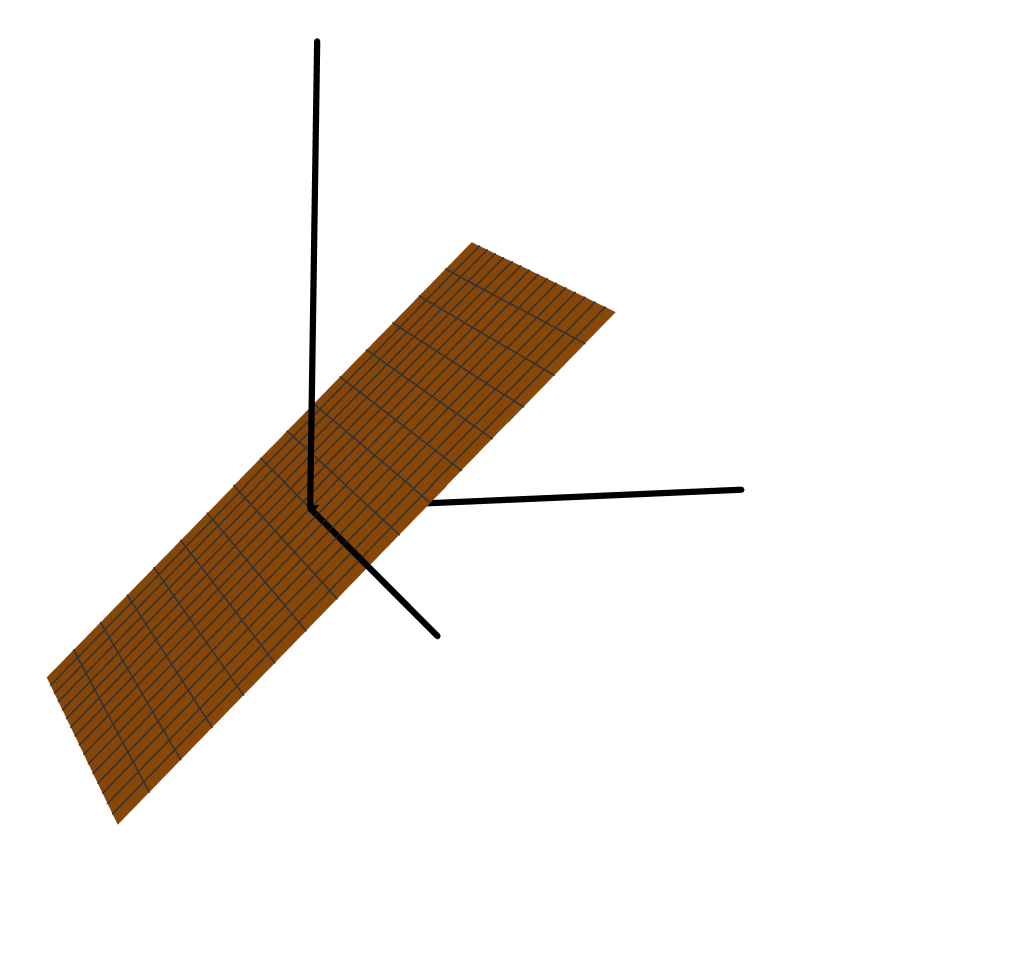},\quad \includegraphics[width=.25\textwidth]{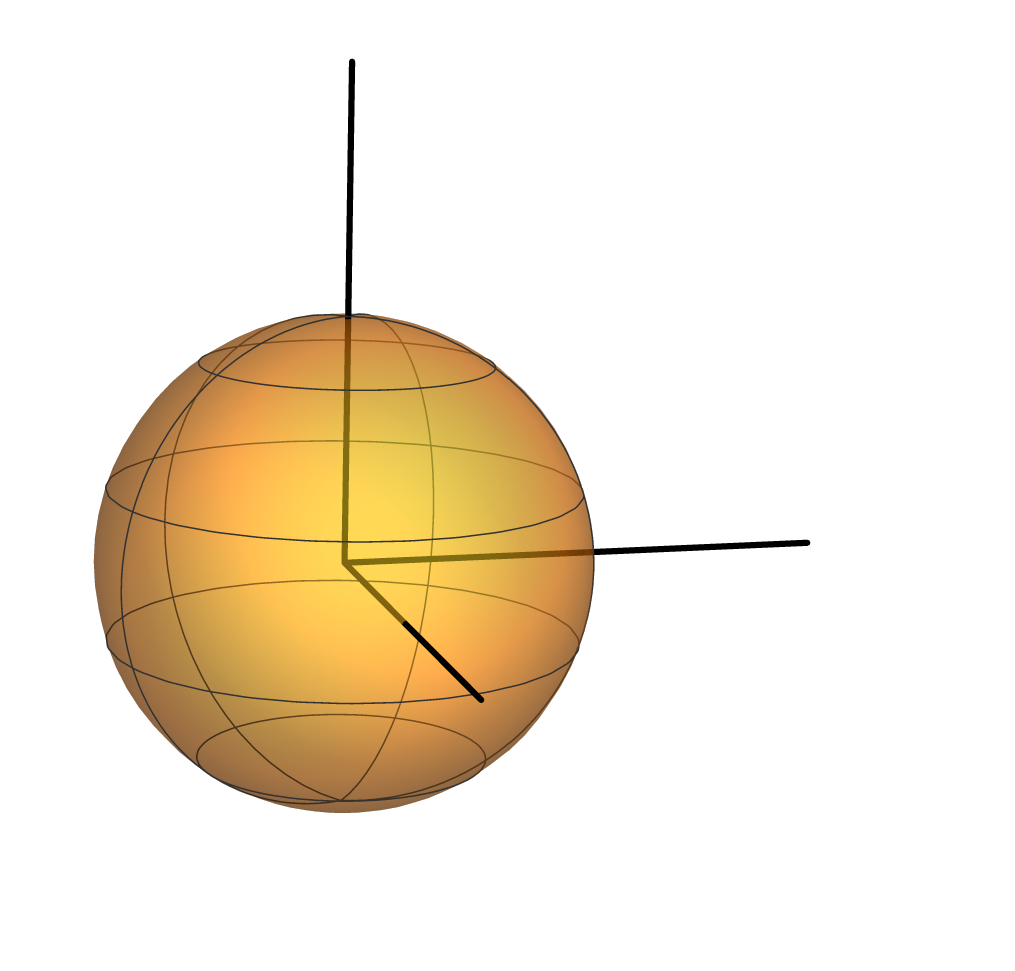}\quad \includegraphics[width=.2\textwidth]{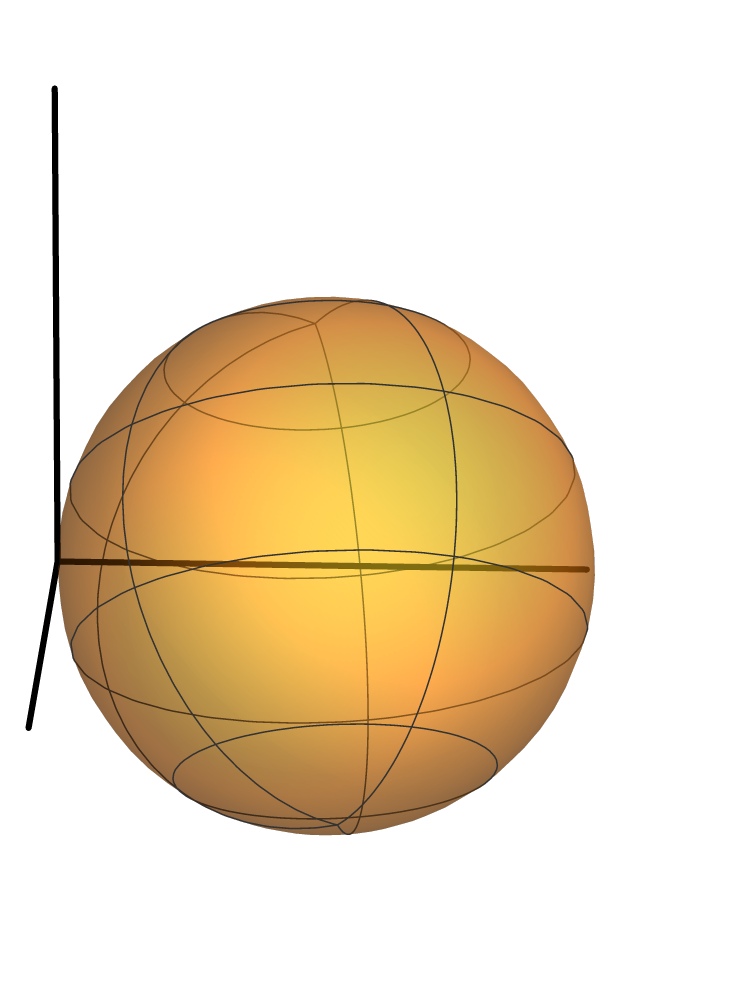}
\end{center}
\caption{Examples of $\alpha$-stationary surfaces: vector planes (left), spheres centered at $0$ (middle) and spheres crossing $0$ (right).}\label{fig0}
\end{figure}
We point out that the energy $E_\alpha$ is not preserved by rigid motions of $\r^3$ and only rigid motions that fix $0$, that is, linear isometries of $\r^3$, preserve the energy $E_\alpha$.  For example, if $\Sigma$ is an $\alpha$-stationary surface, the translated surface $\Sigma+\vec{v}$, $\vec{v}\in\r^3$, is not an $\alpha$-stationary surface.   Dilations from $0$ carry $\alpha$-stationary surfaces in $\alpha$-stationary surfaces. 

It is natural to classify $\alpha$-stationary surfaces assuming some type of geometric property. Recall that  $\alpha$-stationary surfaces of rotational type were investigated and classified in \cite{dl2}. In this paper, we give two types of classification. First, we find all $\alpha$-stationary surfaces which also are ruled surfaces. 

\begin{theorem}\label{t1}
The only ruled $\alpha$-stationary surfaces are:
\begin{enumerate}
\item vector planes, for all $\alpha\in\r$, and
\item $\alpha=1$ and the    surfaces are parametrized by
\begin{equation}\label{heli}
\Psi(s,t)=(0,0,me^s)+t(\cos s,\sin s,0),\quad s,t,m\in\r.\end{equation}
\end{enumerate}
\end{theorem}
Notice that the surface \eqref{heli} looks like the standard helicoid $(s,t)\mapsto (0,0,ms)+t(\cos s,\sin s,0)$ with the difference that the parameter $s$ of the basis curve is replaced by $e^s$. This makes   the third coordinate of the helicoid to elongate exponentially as $s\to\infty$ and, in contrast, when $s$ tends to $-\infty$,  the helicoid flattens out on the plane of equation $z=0$.   

It is not expectable that a cylindrical surface constructed with base an $\alpha$-stationary planar curve may be an $\alpha$-stationary surface. To be precise, let $\gamma$ be   an $\alpha$-stationary  in $\r^2$.  Then the cylindrical surface parametrized by $(s,t)\mapsto (\gamma(s),t)$ is not in general an $\alpha$-stationary surface. This is because for these cylindrical surfaces, all terms of Eq. \eqref{eq1} are preserved  except the denominator in the second hand-side of this equation. See Rem. \ref{r21} below. In fact, Theorem \ref{t1} implies that the only $\alpha$-stationary surfaces of cylindrical type are vector planes (see also Thm. \ref{t21}).

The second type of  results  concerns to the classification of $\alpha$-stationary surfaces of cyclic type. A {\it cyclic surface} is surface   defined as a smooth one-parameter family of   circles, whose radii may change along the parameter. Equivalently, a cyclic surface is a surface such that there is a smooth one-parameter of planes that intersect the surface in circles. We also say that the surface is foliated by circles. An example of a cyclic surface is a rotational surface, where all the circles of the foliation are coaxial with respect to the axis of the surface.   On the other hand,   spheres are rotational surfaces but it can be also viewed as cyclic surfaces when we intersect a sphere for any smooth one-parameter family of planes. In such a case, a sphere  can be foliated by different types of families of circles.    A first result considers the case that   the planes of the foliation are parallel.

\begin{theorem}\label{t2}
Let $\Sigma$ be a cyclic $\alpha$-stationary surface.  If the planes of the foliation are parallel, then   $\Sigma$ is a rotational surface.
\end{theorem}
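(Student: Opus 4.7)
My plan is to exploit the rotational symmetry around the common normal of the foliating planes to parametrise $\Sigma$ explicitly, and then to view the Euler--Lagrange equation \eqref{eq1} along each circle as a trigonometric polynomial identity in $\theta$; vanishing of the third harmonic alone will already give essentially all the rigidity. Choose coordinates so that the foliating planes are horizontal, so $\Sigma$ admits a local parametrisation
\begin{equation*}
X(t,\theta)=\bigl(a(t)+r(t)\cos\theta,\;b(t)+r(t)\sin\theta,\;t\bigr),\qquad r(t)>0,
\end{equation*}
with smooth functions $a,b,r$. The task is to show $a'\equiv b'\equiv 0$, so that the circles share a common vertical axis.

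A direct computation of $X_{t}\times X_{\theta}$ together with the usual formulas for mean curvature and unit normal yields
\begin{equation*}
2rH\,W^{3}=P_{0}+P_{1}\cos\theta+P_{2}\sin\theta,\qquad \langle N,p\rangle\,W=A\cos\theta+B\sin\theta+D,
\end{equation*}
and $|p|^{2}=Q_{0}+Q_{1}\cos\theta+Q_{2}\sin\theta$, where $W=\sqrt{1+(a'\cos\theta+b'\sin\theta+r')^{2}}$, $A=ta'-a$, $B=tb'-b$, $D=tr'-r$, $P_{0}=a'^{2}+b'^{2}+r'^{2}-rr''+1$, $P_{1}=2a'r'-ra''$, $P_{2}=2b'r'-rb''$, and $Q_{0}=a^{2}+b^{2}+r^{2}+t^{2}$, $Q_{1}=2ra$, $Q_{2}=2rb$. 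Multiplying \eqref{eq1} by $2rW^{3}$ eliminates the square root and reduces the $\alpha$-stationary condition to
\begin{equation*}
(P_{0}+P_{1}\cos\theta+P_{2}\sin\theta)(Q_{0}+Q_{1}\cos\theta+Q_{2}\sin\theta)=2r\alpha\,W^{2}(A\cos\theta+B\sin\theta+D),
\end{equation*}
which must hold for every $\theta$. The left hand side has degree at most $2$ in $(\cos\theta,\sin\theta)$ while the right hand side has degree $3$.

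Linearising the cubic monomials via $\cos^{3}\theta=(3\cos\theta+\cos 3\theta)/4$, etc., and equating the coefficients of $\cos 3\theta$ and $\sin 3\theta$ to zero (they are absent on the left), I expect to get the linear system
\begin{equation*}
A(a'^{2}-b'^{2})-2Ba'b'=0,\qquad 2Aa'b'+B(a'^{2}-b'^{2})=0,
\end{equation*}
of determinant $(a'^{2}+b'^{2})^{2}$; equivalently, writing $w=a+ib$, the system reads $(tw'-w)(w')^{2}=0$. Hence at every point either $w'=0$ (so $(a,b)$ is locally constant and $\Sigma$ is rotational around the vertical axis through $(a,b,0)$), or $a(t)=c_{1}t$ and $b(t)=c_{2}t$ on an open interval with $(c_{1},c_{2})\neq(0,0)$.

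The main obstacle is to rule out this second alternative. Assuming $a=c_{1}t$, $b=c_{2}t$, so that $a''=b''=0$ and $A=B=0$, I compute the $\cos 2\theta$ coefficient of the identity, which after simplification reduces to the ODE $(\alpha-2)tr'=\alpha r$; for $\alpha\ne 2$ this gives $r(t)=Ct^{\alpha/(\alpha-2)}$, while $\alpha=2$ directly forces $r\equiv 0$. Substituting this expression for $r$ into the $\cos\theta$ coefficient, elementary algebra reduces the equation to
\begin{equation*}
C(\alpha-1)(\alpha-2)(1+c_{1}^{2}+c_{2}^{2})\,t^{\beta+1}=\alpha(\alpha+2)\,C^{3}\,t^{3\beta-1},\qquad \beta=\frac{\alpha}{\alpha-2}.
\end{equation*}
The exponents $\beta+1$ and $3\beta-1$ coincide only for $\beta=1$, which has no solution, so both sides must vanish separately; since the polynomials $(\alpha-1)(\alpha-2)$ and $\alpha(\alpha+2)$ share no common zero for $\alpha\neq 0$, this forces $C=0$, contradicting $r>0$. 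Therefore $a'\equiv b'\equiv 0$ throughout, and $\Sigma$ is a surface of revolution.
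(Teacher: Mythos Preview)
Your argument follows the same strategy as the paper: parametrise by horizontal circles, rewrite the Euler--Lagrange equation as a trigonometric identity in the angular variable, and read off the coefficients of the successive harmonics, first forcing $(a,b)$ to be linear in $t$ and then deriving a contradiction from the induced ODE for $r$. Your third-harmonic step is in fact neater than the paper's: where the paper forms an auxiliary linear combination of $A_3,B_3$, obtains and factors the sextic $a'^6-15a'^4b'^2+15a'^2b'^4-b'^6=0$, deduces first $b=ma+c$ for a few specific slopes $m$, and only after substituting back gets $c=0$ and $a=c_1u$, your complex identity $(tw'-w)(w')^2=0$ yields the dichotomy $w'=0$ or $w=ct$ in one stroke.

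One discrepancy is worth flagging: your formula $2rHW^{3}=P_0+\cdots$ corresponds to the half-trace normalisation of $H$, whereas the paper takes $H$ to be the sum of the principal curvatures (see the cylinder computation $H=\kappa$ in Section~\ref{s2}). With the paper's convention one has $rHW^{3}=P_0+\cdots$, so the reduced identity should carry $r\alpha W^{2}(\cdots)$ on the right rather than $2r\alpha W^{2}(\cdots)$. This shifts your second-harmonic ODE to $(\alpha-4)tr'=\alpha r$ and your final incompatible pair of polynomials to $(\alpha-4)(\alpha-2)$ and $\alpha(\alpha+4)$, which is exactly what the paper finds. Since your pair $(\alpha-1)(\alpha-2)$ and $\alpha(\alpha+2)$ also share no common zero and $1+c_1^2+c_2^2>0$, the contradiction---and hence the proof---survives this normalisation slip; but you should be aware that the specific constants you obtain do not match equation~\eqref{eq1} as written in the paper.
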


If the planes of the foliation are not parallel, we distinguish cases according to $\alpha$.

\begin{theorem}\label{t3} Let $\Sigma$ be a cyclic $\alpha$-stationary surface. If the planes of the foliation are not parallel, then $\alpha\in \{-5,-4,-2\}$. Furthermore, 
\begin{enumerate} 
\item If $\alpha=-5$, then $\Sigma$ is the inversion with respect to the origin of the ruled surface \eqref{heli}.
\item If $\alpha=-4$, then  $\Sigma$ is an       inversion with respect to the origin  of a plane, a helicoid, a catenoid or a Riemann minimal example.
\end{enumerate}
\end{theorem}

As a consequence of Thms. \ref{t2} and \ref{t3}, we have a classification of $\alpha$-stationary surfaces when $\alpha\not= -2,-4$.

\begin{corollary} Any cyclic $\alpha$-stationary surface with $\alpha\not\in \{-5,-4,-2\}$ is a rotational surface.
\end{corollary}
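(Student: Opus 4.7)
The corollary is a direct dichotomy consequence of the two preceding theorems, so the plan is essentially to organise a clean case split on the geometry of the foliating planes. Let $\Sigma$ be a cyclic $\alpha$-stationary surface with $\alpha\neq-2,-4$, and fix a foliation $\{C_t\}$ of $\Sigma$ by circles. For each $t$, the circle $C_t$ lies in a unique affine plane $\Pi_t\subset\r^3$. The natural dichotomy is: either the family $\{\Pi_t\}$ consists of mutually parallel planes, or it does not.

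In the non-parallel case, Theorem \ref{t3} applies and forces $\alpha\in\{-2,-4\}$, contradicting the standing hypothesis on $\alpha$. Hence this case is vacuous, and the planes $\Pi_t$ must all be parallel. Then Theorem \ref{t2} applies directly and yields that $\Sigma$ is a surface of revolution, which is the desired conclusion.

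The only subtlety I anticipate is that a cyclic surface may admit several distinct foliations by circles (spheres being the standard example). This is not an obstacle here, because the statement only asks us to conclude that $\Sigma$ is a surface of revolution, and it suffices to produce one foliation of $\Sigma$ by circles whose supporting planes are parallel; any given foliation either does this directly or is ruled out by Theorem \ref{t3}. Thus there are no hidden cases and the proof reduces to the two-line case analysis above.
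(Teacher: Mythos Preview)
Your argument is correct and matches the paper's approach exactly: the corollary is stated there as an immediate consequence of Theorems~\ref{t2} and~\ref{t3} via the same parallel/non-parallel dichotomy on the foliating planes. Your remark about multiple circle foliations is a reasonable sanity check but not needed, since the dichotomy applied to any single chosen foliation already forces the conclusion.
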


If $\alpha=-2$, we will prove that there are many examples of non-spherical cyclic  $(-2)$-stationary surfaces.  See Fig. \ref{fig2} in Sect. \ref{s7}.

The paper is organized as follows. Theorem \ref{t1} will be proved in   Sects. \ref{s2} and \ref{s3}: we will distinguish if the surface is or is not of cylindrical type. In Sect. \ref{s4} we prove Thm. \ref{t2}. In Sect. \ref{s5} we prove Thm. \ref{t3} when $\alpha=-4$. Section \ref{s6} gives the proof of Thm. \ref{t3} when $\alpha\not=-2,-4$ and finally, Section \ref{s7} is devoted to the case   $\alpha=-2$. 
\section{Proof of Theorem \ref{t1}: part I}\label{s2}

 The proof of Thm. \ref{t1} is done in two steps. First, the theorem is proved when the ruled surface is a cylindrical surface. In this case, we will prove in this section that   the surface is a vector plane.   In  Sect. \ref{s3} we will consider the case of ruled non-cylindrical surfaces. In such a case, we prove that it is a vector plane or it is the surface parametrized by  \eqref{heli}.
 
A ruled surface is a surface generated by moving a straight line in Euclidean space $\r^3$. A ruled surface can be parametrized by 
\begin{equation}\label{para1}
\Psi\colon I\times\r\to\r^3,\quad \Psi(s,t)=\gamma(s)+t\beta(s),
\end{equation}
where $\gamma \colon I\to\r^3$ is a regular curve, called the  directrix  which we can suppose to be parametrized by arc-length. The curve $\beta$ is a curve on the unit $2$-sphere $\beta\colon I\to\s^2$ and it indicates the direction of the  rulings of the surface  at each point $\alpha(s)$. If $\beta$ is a constant curve, $\beta(s)=\vec{w}$, then the surface is called {\it cylindrical}. In the following result, we classify all cylindrical $\alpha$-stationary  surfaces.   

\begin{theorem} \label{t21}
Vector planes are the   only cylindrical $\alpha$-stationary surfaces.
\end{theorem}

In consequence, this result is included in   the item (1) of Thm. \ref{t1}.
\begin{proof} 
Let $\Sigma$ be a cylindrical surface. A particular case of cylindrical surface is when $\gamma$ is contained in a plane and $\vec{w}$ is parallel to this plane. Then $\Sigma$ is (part of) a plane and, in such a case, if the surface is stationary, then $\Sigma$ is a vector plane. This proves the result. 

Suppose now the general case. Then   the curve $\gamma$ can be replaced by another curve contained in a plane perpendicular to $\vec{w}$. In particular, $\gamma$ is not a straight-line.   Then the parametrization \eqref{para1} can be written as
   \begin{equation}\label{para2}
\Psi\colon I\times\r\to\r^3,\quad \Psi(s,t)=\gamma(s)+t\vec{w},
\end{equation}
where $\gamma \colon I\to\r^3$ is a   curve  parametrized by arc-length, $|\vec{w}|=1$. The mean curvature $H$ is   the curvature  $\kappa$ of $\gamma$.   Let $\{\gamma',{\bf n}, {\bf b}\}$ the Frenet frame. Since $\gamma$ is a planar curve, then ${\bf b}$ is constant so, ${\bf b}=\pm \vec{w}$. After a change of $\vec{w}$ by $-\vec{w}$ if necessary, we can assume $\vec{w}=-{\bf b}$. In consequence, the unit normal of $\Sigma$ is $N=\Psi_s\times\Psi_t=\gamma'\times \vec{w}={\bf n}$. Then Eq. \eqref{eq1} becomes
$$\kappa(s)=\alpha\frac{\langle{\bf n}(s),\Psi(s,t)\rangle}{|\Psi(s,t)|^2}=\alpha\frac{\langle{\bf n}(s),\gamma(s)+t\vec{w}\rangle}{|\gamma(s)|^2+t^2}=\alpha\frac{\langle{\bf n}(s),\gamma(s) \rangle}{|\gamma(s)|^2+t^2}.$$
This identity can be expressed as the  polynomial equation on $t$
$$\kappa(s)t^2+\kappa(s)|\gamma(s)|^2-\alpha\langle{\bf n}(s),\gamma(s)\rangle=0.$$
Then the coefficient of $t^2$ implies $\kappa(s)=0$ identically,  which it is a contradiction.
\end{proof}

\begin{remark} \label{r21}As we said in the Introduction,  the cylindrical surface constructed with an  $\alpha$-stationary planar curve   is not an $\alpha$-stationary surface unless that the surface is a vector plane.    This situation differs from other similar  surfaces whose  mean curvature satisfies a   PDE similar to \eqref{eq1}, and where cylindrical surfaces are   not trivial. In order to show these examples, consider $(x,y,z)$ canonical coordinates of $\r^3$ and let  $N=(N_1,N_2,N_3)$.   
\begin{enumerate}
\item Translating solitons. These surfaces are critical points of the energy $\int_\Sigma e^z\, d\Sigma$. These surfaces are characterized by $H=N_3$.
\item Self-shrinkers. These surfaces are critical points of the energy $\int_\Sigma e^{-|p|^2}\, d\Sigma$. These surfaces are characterized by $H=-\langle N,p\rangle$.
\item Singular minimal surfaces. These surfaces are critical points of the energy $\int_\Sigma z^\alpha\, d\Sigma$. These surfaces are characterized by  $H=\alpha N_3/z$.  
\end{enumerate}
In all these cases, the planar curves $\gamma$ that are critical points of the one-dimensional case provide critical points  in the two-dimensional case by   considering the corresponding cylindrical surfaces $(s,t)\mapsto (\gamma(s),t)$: \cite{al,lo01,lo02}. On the other hand, also in these three cases, any ruled surface  that it is a critical point  of the energy must be cylindrical surface: \cite{an,ay,hi,lo03}. 

\end{remark}

\section{Proof of Theorem \ref{t1}: part II}\label{s3}

The second step in the proof of Thm. \ref{t1} consist in to prove that if the surface is not cylindrical, then it is a vector planer or $\alpha=1$ and the surface is parametrized by \eqref{heli}.  

 Let $\Sigma$ be a ruled non-cylindrical $\alpha$-stationary surface. By using the parametrization \eqref{para1},   we have that  $\{\beta,\beta',\beta\times\beta'\}$ is an orthonormal frame. We can assume that $\gamma$ is the striction line of the surface. This implies that $\gamma'$ is orthogonal to $\beta'$, $\langle \gamma'(s),\beta'(s)\rangle=0$, for all $s\in I$. Notice that $\gamma$ is not necessarily parametrized by arc-length. Let $\{E,F,G\}$ be the coefficients of the first fundamental form with respect to $\Psi$. The unit normal vector of $\Sigma$ is   
\begin{equation}\label{n1}
N=\frac{\Psi_s\times\Psi_t}{\sqrt{EG-F^2}}=\frac{(\gamma'+t\beta')\times \beta}{\sqrt{EG-F^2}}.
\end{equation}
 Since $\Psi_{tt}=0$, the mean curvature $H$ is calculated by the formula
\begin{equation}\label{n2}
H=\frac{G(\Psi_s,\Psi_t,\Psi_{ss})-2F(\Psi_s,\Psi_t,\Psi_{st})}{(EG-F^2)^{3/2}}.
\end{equation}
 where  $(u,v,w)$ denotes the determinant of three vectors $u,v,w\in\r^3$. Using \eqref{n1} and \eqref{n2}, together with   \eqref{eq1}, we have
 $$|\Psi|^2(G(\Psi_s,\Psi_t,\Psi_{ss})-2F(\Psi_s,\Psi_t,\Psi_{st}))-\alpha \langle(\gamma'+t\beta')\times \beta,\Psi\rangle(EG-F^2)=0.$$
 Computing all terms of this equation, we obtain 
\begin{equation*}
\begin{split}
0&= (t^2+2t\langle\gamma,\beta\rangle+|\gamma|^2)\left((\gamma'+t\beta',\beta,\gamma''+t\beta'')-2\langle\gamma',\beta\rangle(\gamma'+t\beta',\beta,\beta')\right)\\
&-\alpha(\gamma'+t\beta',\beta,\gamma+t\beta)(|\gamma'|^2+t^2|\beta'|^2-\langle \gamma',\beta\rangle^2).
\end{split}
\end{equation*}
By simplifying, we arrive to
$$(t^2+2t\langle\gamma,\beta\rangle+|\gamma|^2)\left((\gamma'+t\beta',\beta,\gamma''+t\beta'')-2\langle\gamma',\beta\rangle(\gamma',\beta,\beta')\right)-\alpha(\gamma'+t\beta',\beta,\gamma)(|\gamma'|^2+t^2|\beta'|^2-\langle \gamma',\beta\rangle^2)=0.$$ 
 This is a polynomial equation on $t$ of degree $4$, 
 \begin{equation}\label{pol}
 \sum_{n=0}^4A_n(s)t^n=0.
 \end{equation}
 Thus all coefficients must vanish identically in $I$. The coefficient $A_4$ is 
 $$A_4=(\beta',\beta,\beta'').$$  
Therefore, $A_4=0$ gives $(\beta',\beta,\beta'')=0$. This is equivalent to say  that $\beta$ is a geodesic of $\s^2$. After a linear isometry of $\r^3$, which does not affect to Eq. \eqref{eq1} neither to the condition that the surface is ruled, we can assume that $\beta$ is the horizontal equator of $\s^2$.   A reparametrization of $\beta$ does not change the condition that $\gamma$ is  a striction line, that is, the   orthogonality between $\beta'$ and  $\gamma'$. From now, we assume 
\begin{equation}\label{b5}
\beta(s)=(\cos s,\sin s,0).
\end{equation}
 Moreover, we have $\beta\times\beta'= e_3:=(0,0,1)$ and $\beta''=-\beta$. 

The computation of $A_3$ gives
$$A_3=(\beta',\beta,\gamma'')+(\gamma',\beta,\beta'')-\alpha(\beta',\beta,\gamma).$$
Using $(\gamma',\beta,\beta'')=-\langle\gamma',\beta'\rangle=0$,   the equation $A_3=0$ is
\begin{equation}\label{g2}
\langle\gamma''(s)-\alpha\gamma(s),e_3\rangle=0
\end{equation}
for all $s\in I$. The computation of the rest of coefficients of the polynomial equation \eqref{pol} gives
\begin{equation}\label{as}
\begin{split}
A_2=&-\alpha(\gamma',\beta,\gamma)+(\gamma',\beta,\gamma'')+2\langle\gamma,\beta\rangle(\beta',\beta,\gamma'')-2\langle\gamma',\beta\rangle(\gamma',\beta,\beta'),\\
A_1=&|\gamma|^2(\beta',\beta,\gamma'')+2\langle\gamma,\beta\rangle\left((\gamma',\beta,\gamma'')-2\langle\gamma',\beta\rangle(\gamma',\beta,\beta')\right)-\alpha(\beta',\beta,\gamma)(|\gamma'|^2-\langle\gamma',\beta\rangle^2),\\
A_0=&|\gamma|^2\left((\gamma',\beta,\gamma'')-2\langle\gamma',\beta\rangle(\gamma',\beta,\beta')\right)-\alpha(|\gamma'|^2-\langle\gamma',\beta\rangle^2)(\gamma,\gamma',\beta).
\end{split}
\end{equation}
With respect to  the orthonormal basis $\{\beta,\beta',e_3\}$,  we express $\gamma$ in coordinates with respect to  this basis, 
$\gamma(s)=(a(s),b(s),c(s))$, where $a,b,c$ are smooth functions on $I$. We also write in coordinates $\gamma'$ and $\gamma''$ with respect to the same basis, using that $\beta''=-\beta$ and $e_3'=0$. Then we obtain  
\begin{equation}\label{abc}
\begin{split}
\gamma&=(a,b,c),\\
\gamma'&=(a'-b,a+b',c'),\\
\gamma''&=(a''-a-2b',2a'+b''-b,c'').
\end{split}
\end{equation}
Since $\gamma$ is the striction line of $\Sigma$,   the condition $\langle\gamma',\beta'\rangle=0$  is equivalent to $a+b'=0$, that is,  $a=-b'$. On the other hand, equation \eqref{g2} writes now as 
  \begin{equation}\label{g22}
  c''=\alpha c.
  \end{equation}
   With this information,  we write again \eqref{abc}, obtaining
\begin{equation*}
\begin{split}
\gamma&=(-b',b,c),\\
\gamma'&=(-b''-b,0,c'),\\
\gamma''&=(-b'''-b',-b''-b,\alpha c).
\end{split}
\end{equation*}
 We impose $A_2=A_1=A_0=0$ in equations \eqref{as} and we write these equations in terms of $a$, $b$ and $c$, obtaining respectively
\begin{align}
0&=c'(b'' +(1-\alpha) b)+2 \alpha  c b',\label{a1}\\
0&=\alpha  c \left(|\gamma|^2- c'^2\right)+2 b' c' (b''+b),\label{a2}\\
0&=c' \left( |\gamma|^2(b''+b)-\alpha  bc'^2 \right)\label{a3}
\end{align}
From \eqref{a3}, we discuss two cases.
\begin{enumerate}
\item Case $ c'=0$ identically.  From \eqref{g22}, we obtain  $c=0$. This   gives $\gamma(s)=0$ and, consequently, $ \Sigma$ is the plane of equation   $z=0$. This case is included in the item (1) of Thm. \ref{t1}.
\item Case $c'\not=0$ at some  $s_0\in I$.  Then $c'\not=0$ in a subinterval $\widetilde{I}\subset I$ around $s_0$. From now on, we work in $\widetilde{I}$. Using   \eqref{a3}, we want to get $|\gamma|^2$. We distinguish two cases.
\begin{enumerate}
\item Case $b''+b=0$. Then \eqref{a3} gives $b=0$. Equation \eqref{a2} is now $|\gamma|^2=c'^2$. This implies $c'^2=c^2$ which, together with \eqref{g22} yields $\alpha=1$ and $c(s)=me^s$ for some $m\in\r$. From the first equation of \eqref{abc}, we have $\gamma(s)=c(s)e_3=(0,0,me^s)$. Since $\beta$ is given in \eqref{b5}, we have proved  the item (2) of Thm. \ref{t1}.
\item Case $b''+b'\not=0$. Then \eqref{a3} implies
$$|\gamma|^2=\alpha\frac{bc'^2}{b''+b}.$$
Replacing in \eqref{a2}, we have 
\begin{equation}\label{a4}
\alpha cc'(\alpha b-(b''+b))+2b'(b''+b)^2=0.
\end{equation}
From \eqref{a1}, we know 
$$b''+b=\frac{\alpha bc'-2\alpha b'c}{c'},$$
which putting in \eqref{a4} gives
$$b'\left(\alpha^2c^2+(b''+b)^2\right)=0.$$
The parenthesis cannot be $0$, so $b'=0$. This implies that $b(s)=b$ is a non-zero constant function because $b''+b\not=0$. Coming back to \eqref{a1}, we have $(1-\alpha)b=0$. Thus $\alpha=1$. Now \eqref{a3} yields $|\gamma|^2=c'^2$, that is, $b^2+c^2=c'^2$. This is in contradiction with \eqref{g22}. This completes the proof of the result.

\end{enumerate} 
 \end{enumerate}

\section{Proof of Theorem \ref{t2}}\label{s4}

Let $\Sigma$ be a cyclic $\alpha$-stationary surface where  the planes containing the circles of the foliation are parallel.  After a linear isometry, we can assume that these planes are parallel to the $xy$-plane. Thus a  parametrization of $\Sigma$ is 
$$\Psi(u,v)=(a(u),b(u),u)+r(u)(\cos{v},\sin{v},0),\quad (u,v)\in I\times\r,$$
where  $a, b$ and $r$ are smooth function in some interval $I\subset\r$
and  $r=r(u)>0$ denotes the radius of each circle of the foliation. The surface $\Sigma$ is rotational if and only if the functions $a$ and $b$ are constant. The computation of  Eq. \eqref{eq1} gives an equation  of type
\begin{equation*}
\sum_{n=0}^{3} A_n(u) \cos{(n v)}+B_n(u)\sin{(n v)}=0.
\end{equation*}
Since the trigonometric functions $\cos(nv)$ and $\sin(nv)$ are linearly independent,  the functions $A_n$ and $B_n$ vanish identically on $I$.  For $n=3$, we obtain
\begin{equation*}
\begin{split}
A_3&=\frac{1}{4} \alpha  r^3 \left(-2 b a' b'-\left(a-3 u a'\right) b'^2+\left(a-u a'\right) a'^2\right)\\
B_3&=\frac{1}{4} \alpha  r^3 \left(a' \left(2 a-3 u a'\right) b'+b \left(a'^2-b'^2\right)+u b'^3\right).
\end{split}
\end{equation*}
The linear combination
$$(3a'b'^2-a'^3)A_3+(3a'^2b'-b'^3)B_3=0$$
implies
$$a'^6-15 a'^4 b'^2+15 a'^2 b'^4-b'^6=0.$$
This equation can be expressed as
\begin{equation}\label{c1}
(a'^2-b'^2)\left((a'^2-b'^2)^2-12 a'^2b'^2\right)=0.
\end{equation}
We have two possibilities: either $a'^2-b'^2=0$ identically in $I$ or $a'^2-b'^2\not=0$ at some point $u=u_0\in I$. In the second case, 
 in a subinterval of $I$ around $u_0$, we have $a'^2-b'^2\not=0$. Thus  \eqref{c1} is equivalent to $(a'^2-b'^2)^2-12 a'^2b'^2=0$. This gives $b'^2=(7\pm 4\sqrt{3})a'^2$.  As a conclusion, the cases $a'^2-b'^2=0$ identically or $a'^2-b'^2\not=0$ imply that  there are $m,c\in\r$ such that $b(u)=m a(u)+c$, where $m^2=1$ or $m^2=7\pm 4\sqrt{3}$. We see that this gives a contradiction. Coming back to the expressions of $A_3$ and $B_3$, we hae
\begin{equation*}
\begin{split}
A_3&=\frac{1}{4} \alpha  r^3 a'^2 \left(\left(3 m^2-1\right) u a'+\left(1-3 m^2\right) a-2 c m\right)\\
B_3&=\frac{1}{4} \alpha  r^3 a'^2 \left(\left(m^2-3\right) m u a'-\left(m^2-3\right) m a-c m^2+c\right).
\end{split}
\end{equation*}
From equations $A_3=B_3=0$, a first case to consider is $a'=0$ identically. Since $b'=ma'$, we have  $b'=0$ identically. Then the functions $a$ and $b$ are constant, hence the surface is rotational. This proves the result. 

Suppose now $a'\not=0$ and we will see that this case is not possible. A combination of the   equations $A_3=0$ and $B_3=0$ implies $c=0$ and $a-ua'=0$. Solving this equation, we have $a(u)=c_1 u$, $c_1\not=0$ and, consequently, $b(u)=m c_1 u$ as well. The computation of the coefficient $B_2$   now gives
$$B_2=c_1^2m^2r^3(\alpha r+u(4-\alpha)r').$$
Then $B_2=0$ implies $\alpha r+u(4-\alpha)r'=0$. Notice that if $4-\alpha =0$, then $r=0$, which it is not possible. Thus, $4-\alpha\not=0$ and the solution of the differential equation $\alpha r+u(4-\alpha)r'=0$ is 
$$r(u)=c_2u^{\frac{\alpha}{\alpha-4}},\quad c_2\not=0.$$
Finally, the computation of the equation $A_1=0$ yields
$$(\alpha -4) (\alpha -2)  \left(c_1^2 \left(m^2+1\right)+1\right)u^2-\alpha  (\alpha +4) c_2^2 u^{\frac{2 \alpha }{\alpha -4}}=0.$$
Since the functions $\{u^2,u^{\frac{2 \alpha }{\alpha -4}}\}$ are linearly independent, their coefficients must vanish. This implies   $(\alpha-4)(\alpha-2)=0$ and $\alpha(\alpha+4)=0$. Since this is not possible, we arrive to the desired contradiction, proving the result.

\section{Proof of Theorem \ref{t3}: case $\alpha=-4$}\label{s5}

We prove Thm. \ref{t3} when  $\alpha=-4$. This case is particular because the family of $(-4)$-stationary surfaces are related with the family of minimal surfaces, which we now explain. Consider the inversion   from $0$ given by 
$$\Phi\colon\r^3-\{0\}\to\r^{3}-\{0\},\quad \Phi(p)=\frac{p}{|p|^2}.$$
The following result was proved in \cite{lo1}
\begin{theorem}\label{tlo}
Let $\Sigma$ be a surface in Euclidean space. If $\Sigma$ is an $\alpha$-stationary surface, then $\Phi(\Sigma)$ is an $-(\alpha+4)$-stationary surface.
\end{theorem}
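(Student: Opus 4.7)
The approach I would take is to exploit the conformal nature of the inversion $\Phi(p)=p/|p|^{2}$. The cleanest route is variational: derive an identity between the energies of $\Sigma$ and $\Phi(\Sigma)$ and then pass to the Euler-Lagrange equation using the fact that $\Phi$ is a diffeomorphism, so compactly supported variations of $\Sigma$ and of $\Phi(\Sigma)$ are in one-to-one correspondence. The two ingredients are $|\Phi(p)|=1/|p|$ and the conformal area identity: since $D\Phi_{p}$ equals $|p|^{-2}$ times a reflection through $p^{\perp}$, the induced area element transforms as $d(\Phi(\Sigma))=|p|^{-4}\,d\Sigma$. Combining them yields a relation of the form $E_{\alpha}[\Phi(\Sigma)] = E_{\beta}[\Sigma]$ with $\beta$ determined by $\alpha$, and then $\Phi(\Sigma)$ is critical for $E_{\alpha}$ precisely when $\Sigma$ is critical for $E_{\beta}$, converting the $\alpha$-stationarity of $\Sigma$ into the asserted stationarity of $\Phi(\Sigma)$ for the shifted parameter.

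Alternatively, one may verify \eqref{eq1} for $\Phi(\Sigma)$ pointwise. Because $D\Phi_{p}$ is conformal, the unit normal along $\Phi(\Sigma)$ can be taken as the reflection $\tilde{N} = N - 2\langle N,p\rangle p/|p|^{2}$, which gives the clean identity
$$\frac{\langle \tilde{N},\Phi(p)\rangle}{|\Phi(p)|^{2}} = -\langle N,p\rangle.$$
Combined with the classical transformation rule for the scalar mean curvature under inversion, derivable either from the conformal-change formula applied to the pulled-back metric $\Phi^{*}g = |p|^{-4}g$ or by a direct computation in a principal-curvature frame, one expresses $\tilde{H}(\Phi(p))$ as a linear combination of $H(p)$ and $\langle N(p),p\rangle$; substituting the hypothesis $H = \alpha\langle N,p\rangle/|p|^{2}$ leaves an expression proportional to $\langle N,p\rangle$, which is then matched against $(\alpha+4)\langle \tilde{N},\Phi(p)\rangle/|\Phi(p)|^{2}$ to close the argument.

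I expect the main obstacle to be not computational but organizational. Deriving (or invoking) the correct transformation rule for $\tilde{H}$ under $\Phi$ is the crux, but the real care is in tracking the sign conventions for $\tilde{N}$ (which is free up to orientation) and for the scalar $H$ implicit in \eqref{eq1} (and consistent with the values of $\alpha$ recorded for spheres in the Introduction). These two sign choices are what pin down the precise shift in the parameter, and they must be made consistently with the author's convention throughout the paper; once they are in place, the verification of \eqref{eq1} for $\Phi(\Sigma)$ with parameter $\alpha+4$ reduces to a one-line algebraic check.
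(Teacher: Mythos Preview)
The paper does not prove this in-line; it quotes the result from \cite{lo1} and says only that ``the proof consists in the computation of the mean curvature of the image of a surface by the inversion $\Phi$.'' Your second route---expressing $\tilde N$ as the reflection of $N$ through $p^{\perp}$ and invoking the transformation law for $H$ under $\Phi$---is precisely this, so on that count your proposal matches the paper.

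Your variational route is a genuinely different and cleaner argument that the paper does not mention. Carrying it out explicitly: from $|\Phi(p)|=|p|^{-1}$ and $d(\Phi(\Sigma))=|p|^{-4}\,d\Sigma$ one obtains
\[
E_{\beta}[\Phi(\Sigma)]=\int_{\Sigma}|p|^{-\beta-4}\,d\Sigma=E_{-\beta-4}[\Sigma],
\]
so $\Phi(\Sigma)$ is $\beta$-stationary iff $\Sigma$ is $(-\beta-4)$-stationary; starting from the hypothesis that $\Sigma$ is $\alpha$-stationary, the conclusion is that $\Phi(\Sigma)$ is $(-\alpha-4)$-stationary. This is the reflection about $-2$, an involution (as it must be, since $\Phi\circ\Phi=\mathrm{id}$), fixing $\alpha=-2$ and interchanging $\alpha=-4$ with $\alpha=0$---exactly what the discussion immediately after the theorem asserts. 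The printed shift ``$\alpha+4$'' is not an involution and would send $-2$ to $2$, so it cannot be the intended statement; what both of your computations will actually establish is the version with $-\alpha-4$. Your closing caution about sign conventions is well placed, but the discrepancy you will encounter lies in the theorem's wording rather than in your method.
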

 According to this theorem, the value $\alpha=-2$ is special in the sense that the mapping $\alpha\mapsto -(\alpha+4)$ is the symmetry about the value $-2$. Thus the inversion of a $(-2)$-stationary surface is again a $(-2)$-stationary surface. The case $\alpha=-4$ is interesting because Thm. \ref{tlo} says that the inversion  of a $(-4)$-stationary surface is a minimal surface ($H=0$) and vice versa.  

On the other hand, the inversion $\Phi$   maps the class of curves formed by straight-lines and circles into itself. Some interesting examples are the following.
\begin{enumerate}
\item A circle crossing $0$ is mapped via $\Phi$ to a straight-line which does not contain $0$.
\item A circle that does not cross $0$ is mapped via $\Phi$ to another circle which does not contains $0$.
\item Any straight-line crossing $0$ remains invariant via $\Phi$.
\item Any straight-line that does not cross $0$ is mapped via $\Phi$ to a circle which contains $0$.
\item As a consequence, the inversion of a rotational surface whose rotation axis crosses $0$ is a rotational surface with the same axis.
\end{enumerate}
Therefore the image of a cyclic surface  by the inversion $\Phi$ is a surface foliated by circles and straight-lines. Similarly, the image of a ruled surface by the inversion $\Phi$ is a surface foliated by circles and straight-lines.

The classification of cyclic $(-4)$-stationary surfaces is now clear thanks to Thm. \ref{tlo}. Let $\Sigma$ be a cyclic $(-4)$-stationary surface. Then $\Phi(\Sigma)$ is a minimal surface    foliated by circles and straight-lines.  It is known that the only minimal surfaces foliated by   circles and straight-lines are  planes,   helicoids,   catenoids or      the Riemann minimal examples. In consequence, the cyclic $(-4)$-stationary surfaces are the image by the inversion $\Phi$ of these minimal surfaces.   See  Fig. \ref{fig1}. This proves the item (2) of Thm. \ref{t3}. Notice that the inversion of a plane is a vector plane (if the initial plane contains $0$) or a sphere crossing $0$, otherwise.  

  \begin{figure}[hbtp]
\begin{center}
\includegraphics[width=.3\textwidth]{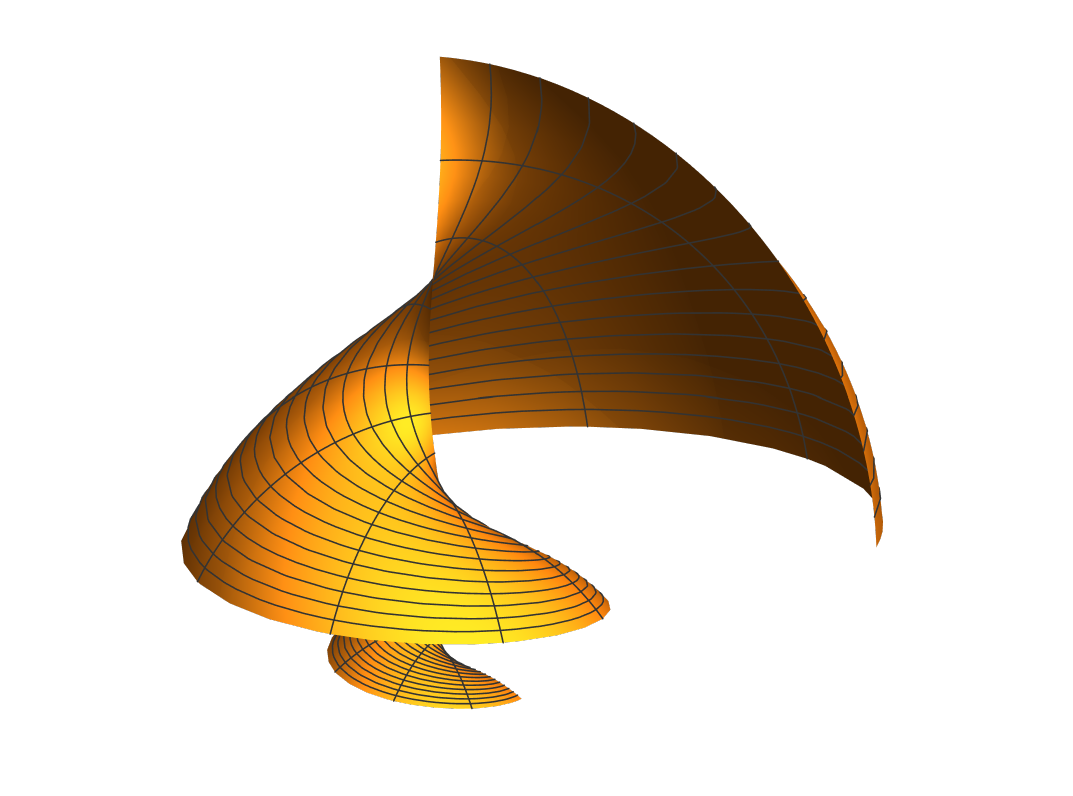},\quad \includegraphics[width=.25\textwidth]{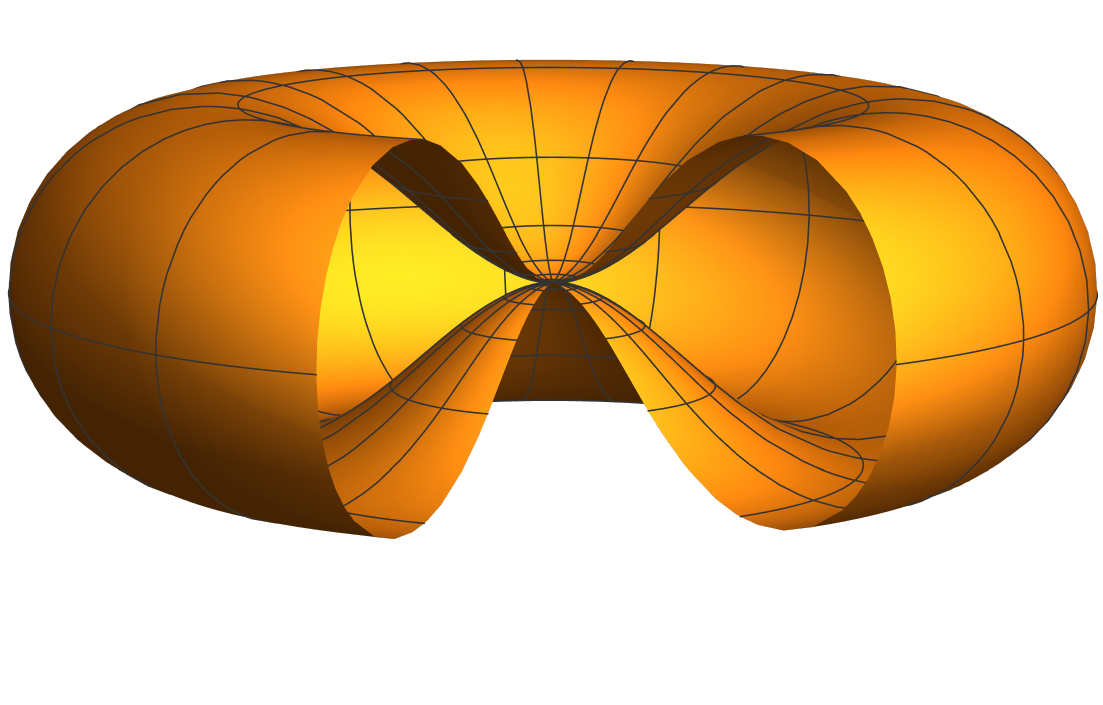}\quad \includegraphics[width=.2\textwidth]{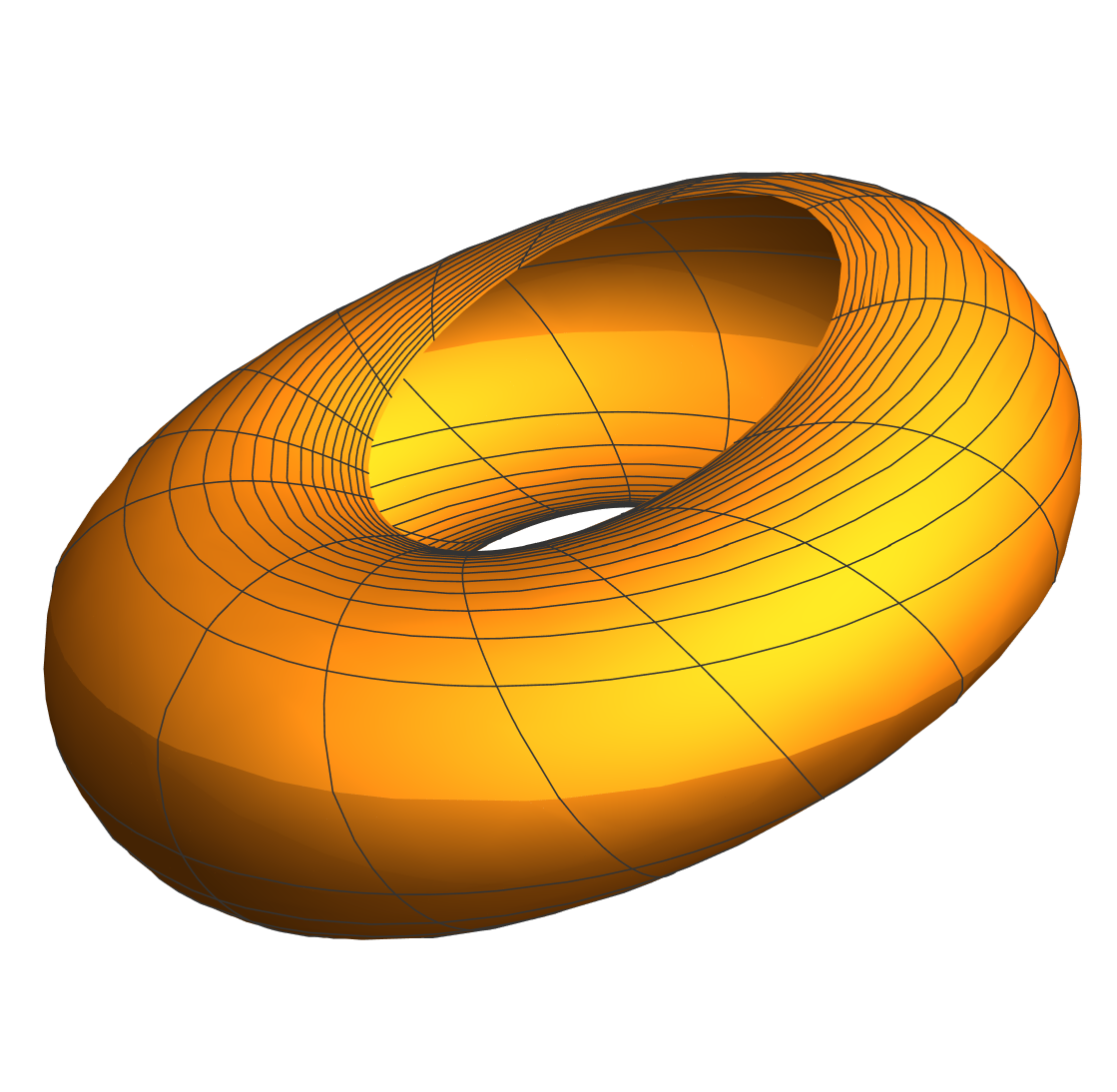}
\end{center}
\caption{Examples of $(-4)$-stationary surfaces obtained by inversions of a helicoid (left), a catenoid (middle) and a Riemann minimal example (right).}\label{fig1}
\end{figure}
 
\section{Proof of Theorem \ref{t3}: case $\alpha\not=-4,-2$}\label{s6}

In this section, and for $\alpha\not=-4,-2$, we study $\alpha$-stationary surfaces $\Sigma$ foliated by circles where the planes of the foliation are not parallel. First, we need  a suitable parametrization of $\Sigma$. The proof is based on  \cite{ni} where Nitsche proved that the only non-zero cyclic constant mean curvature surfaces are rotational surfaces or spheres. Let $\Psi=\Psi(u,v)$ be a parametrization of $\Sigma$ where the parametric curves $v\mapsto \Psi(u,v)$ are   circles. For each $u$, let  $\Pi_u\subset\r^3$ be the affine plane containing this circle.   Let $Z$ be a smooth unit vector field perpendicular to each plane $\Pi_u$ and let  $\Gamma=\Gamma(u)$ be an integral curve of $Z$ parametrized by arc-length. Let 
$$\T(u):=\Gamma'(u)=Z(\Gamma(u)),$$
 where $\T$ is the unit tangent vector to
$\Gamma$. Since the planes of the foliation of $\Sigma$ are not parallel, the curve $\Gamma$ is not a straight-line, in particular, $\Gamma$ has associated  a well-defined Frenet frame, which is denoted by  $\{\T,\N,\B\}$, where $\N$ and $\B$ are the normal and binormal vectors, respectively. Definitively, a parametrization of $\Sigma$ is given by  
\begin{equation}\label{parame}
\Psi(u,v)=\C(u)+r(u)(\cos{v}\ \N (u)+\sin{v}\ \B (u)),
\end{equation}
where $r=r(u)>0$ and $\C=\C(u)$ denote the radius and the center of each $u$-circle $\Sigma\cap\Pi_u$, respectively. The Frenet equations of $\Gamma$ are
\begin{eqnarray*}
\T'&=&\hspace*{.8cm}\kappa \N\\
\N'&=&-\kappa \T+\ \ \tau \B\\
\B'&=&\hspace*{.6cm}-\tau \N,
\end{eqnarray*}
where the prime $'$ denotes the derivative with respect to the
$u$-parameter and $\kappa$ and $\tau$ are the curvature and torsion
of $\Gamma$, respectively. We will need to express $\C$ in coordinates with respect to the Frenet frame, namely, 
\begin{equation}\label{alfa}
\C=a \T+b\N+c\B,
\end{equation}
where $a,b,c$  are smooth functions on the variable $u$.

Once we have the parametrization $\Psi$, we compute Eq. \eqref{eq1}. After a lengthy computations using the symbolic software Mathematica, this equation  becomes 
\begin{equation}\label{formula}
 \sum_{n=0}^4 (A_n(u) \cos{(nv)} +B_n(u)\sin{(n v)})=0.
\end{equation}
Therefore,  all coefficients $A_n$ and $B_n$ vanish identically in the domain of the  variable $u$.   
For $n=4$, we have
\begin{equation*}
\begin{split}
A_4&=\frac{1}{8} (\alpha +4) r^4 \kappa  \Big(2 c' (c (a \kappa +b'-c \tau )+b^2 \tau )\\
&-b \left(2 a \kappa  (b'-2 c \tau )+a^2 \kappa ^2-4 c \tau  b'+b'^2-\tau ^2 (b^2-3 c^2)+r^2 \kappa ^2\right)+b c'^2\Big),\\
B_4&=-\frac{1}{8} (\alpha +4) r^4 \kappa  \Big(2 a \kappa  \left(c \left(b'-c \tau \right)+b c'+b^2 \tau \right)+a^2 c \kappa ^2\\
&+c \left(b'^2-\left(b \tau +c'\right) (3 b \tau +c')+r^2 \kappa ^2\right)+2 b b' (b \tau +c')-2 c^2 \tau  b'+c^3 \tau ^2\Big).
\end{split}
\end{equation*}
The linear combination $cA_4-b B_4=0$ gives 
\begin{equation}\label{cases}
(4+\alpha)r^4\kappa  \left(b^2+c^2\right) \left(b \tau +c'\right) \left(a \kappa +b'-c \tau \right)=0.
\end{equation}
Since $\alpha+4\not=0$, we have three cases to discuss.
\begin{enumerate}
\item Case $b^2+c^2=0$ identically. The equation $A_3=0$ is 
$$  (\alpha +2) r^3 \kappa ^3 \left(a^2+r^2\right) =0.$$
This gives a contradiction because $r\not=0$ and $2+\alpha\not=0$.  
 
\item Case $b\tau+c'=0$. Then $c'=-b\tau$. The case $b^2+c^2=0$ will be discarded. Equations $A_4=0$ and $B_4=0$ become
\begin{equation*} 
\begin{split}
0&=(\alpha +4) b    \left(2 a \kappa  \left(b'-c \tau \right)+a^2 \kappa ^2+\left(b'-c \tau \right)^2+r^2 \kappa ^2\right)\\
0&=(\alpha +4) c  \left(2 a \kappa  \left(b'-c \tau \right)+a^2 \kappa ^2+\left(b'-c \tau \right)^2+r^2 \kappa ^2\right).
\end{split}
\end{equation*}
Since $b\not=0$ and $c\not=0$, the parenthesis must vanish. However, this equation can be viewed as  a polynomial equation of degree $2$ on $a$ whose discriminant is $-4r^2\kappa^2<0$. This gives a contradiction. 

\item Case $a \kappa +b'-c \tau =0$. Then $b'=c\tau-a\kappa$. We also suppose $b^2+c^2\not=0$ and $b\tau+c'\not=0$ by the two previous cases. Then

\begin{equation*} 
\begin{split}
A_4&=\frac{1}{8} (\alpha +4) b r^4 \kappa  \left(\left(b \tau +c'\right)^2-r^2 \kappa ^2\right)\\
B_4&=\frac{1}{8} (\alpha +4) c r^4 \kappa  \left(\left(b \tau +c'\right)^2-r^2 \kappa ^2\right).
\end{split}
\end{equation*}
  This implies $b\tau+c'=\pm r\kappa$. Assume $b\tau+c'=  r\kappa$. Using this identity together $a \kappa +b'-c \tau =0$, we can get an expression of $b''$ in terms of $c$ and $c'$. By substituting into $A_3=0$ and $B_3=0$, we obtain 
\begin{equation*} 
\begin{split}
A_3&=\frac{1}{2} (\alpha+5)r^5 \kappa^2  (b^2\kappa-a'b+cr'),\\
B_3&=-\frac{1}{2} (\alpha+5) r^5 \kappa^2  (a'c-bc\kappa+br').
\end{split}
\end{equation*}
The linear combinations $bA_3+cB_3=0$ and $cA_3-bB_3=0$ give, respectively, 
\begin{equation*} 
\begin{split}
0&=(\alpha +5) \kappa  \left(b^2+c^2\right) \left(b \kappa -a'\right),\\
0&=(\alpha+5)r'\kappa=0.
\end{split}
\end{equation*}
In case that   $b\tau+c'=- r\kappa$, we obtain the same two equations.  We distinguish two cases. 
\begin{enumerate}
\item Case $\alpha\not=-5$. We deduce from the first equation that $a'=b\kappa$ and  from the second equation, that $r$ is a constant function $r(u)=r_0\not=0$. Now the computations of equations  $A_2=0$ and $B_2=0$ imply 
\begin{equation*} 
\begin{split}
0&= r_0^6 c \kappa ^3\\
0&= r_0^6 b \kappa ^3.
\end{split}
\end{equation*}
  This is a contradiction because $b,c\not=0$.  
 \item Case $\alpha=-5$. Consider the surface $\Phi(\Sigma)$. This surface is a $1$-stationary surface (Thm. \ref{tlo}) which it is a cyclic surface or a ruled surface. Suppose that $\Phi(\Sigma)$ is a cyclic surface. By the last   case (a), the planes of the foliation must be parallel. Now Thm. \ref{t2} implies that the surface is rotational. By \cite[Prop. 4.1]{dl2}, the axis crosses $0$ or it is a sphere containing $0$. The latter case is not possible by the value of $\alpha$. Thus the axis crosses $0$. Coming back to $\Sigma$ via $\Phi$, the surface $\Sigma$ is a rotational surface with the same rotation axis, which is contradictory because it was initially assumed that the planes of the foliation were are not parallel. 
 
 Definitively, $\Phi(\Sigma)$ is a ruled surface. By the classification given in Thm. \ref{t1},   $\Phi(\Sigma)$ is the ruled surface parametrized by \eqref{heli}. This proves the item (1) of Thm. \ref{t3}, completing the proof.  
 \end{enumerate}
  \end{enumerate}
\section{The case $\alpha=-2$: cyclic $(-2)$-stationary surfaces}\label{s7}

In this section, we assume $\alpha=-2$. Recall that the value $\alpha=-2$ is special in Thm. \ref{tlo}. An example of $(-2)$-stationary surface is any sphere centered at $0$. This sphere can be parametrized by a cyclic surface without to be a parametrization of a rotational surface. Let $\Sigma$ be a $(-2)$-stationary surface such that   the planes of the foliation are not parallel. Here we follow the same notation as in  Sect. \ref{s5}. Notice that  $|\Psi(u,v)|^2=r^2+a^2$. By observing   the discussion on cases in \eqref{cases}, we find that $b\tau+c'=0$ and $a\kappa+b'-c\tau=0$ are not possible. 

It remains to discuss the case  $b^2+c^2=0$. Thus $b=c=0$. In consequence, by \eqref{alfa}, we have 
$\C=a\T$. In such a case, the coefficients $A_3$, $B_3$, $A_2$ and $B_2$ are trivially $0$.   Now $B_1=0$ is 
$$(a^2+\kappa^2)\tau(aa'+rr')=0.$$
 Since $\kappa\not=0$, then $aa'+rr'=0$ or $\tau=0$. 
 \begin{enumerate}
 \item Case  $aa'+rr'=0$ identically. Then the function $a^2+r^2$ is constant. Since $|\Psi(u,v)|^2=r^2+a^2$, we conclude that  $\Sigma$ is a sphere centered at $0$.
 
 \item Case $\tau=0$. Then Eq. \eqref{formula} reduces to $A_1=0$ and $A_0=0$. Both equations describe all cyclic $(-2)$-stationary surfaces, namely, 
 \begin{align} 
0=&
 a^2 r \left(\kappa  \left(-3 a'^2+r r''+3 r'^2\right)-r r' \kappa '\right)+r^3 \left(\kappa  \left(a'^2+r r''-r'^2\right)-r r' \kappa '\right)\nonumber\\
 &+a^3 \left(r \kappa  a''+a' \left(2 \kappa  r'-r \kappa '\right)\right)+a r^2 \left(r \kappa  a''-a'  (6 \kappa  r'+r \kappa')\right),\label{21}\\
 0=&a r r' \left(2 \left(a'^2+r'^2\right)+r^2 \kappa ^2\right)+a^4 \kappa ^2 a'+a^2 \left(r a'' r'-r a' r''+a' r'^2+r^2 \kappa ^2 a'+a'^3\right)\nonumber\\
 &-r^2 \left(-r a'' r'+a' \left(r r''+r'^2\right)+a'^3\right)+a^3 r \kappa ^2 r'.\label{22}
 \end{align}
 \end{enumerate}

Definitively, we have proved the following result.

\begin{theorem} Let $\Sigma$ be a non-spherical cyclic  $(-2)$-stationary surface whose planes of the foliations are not parallel. Suppose that the surface is parametrized by \eqref{parame}. Then the curve   $\Gamma=\Gamma(u)$ is planar, where   $\C=a \T$. Moreover, the functions $r(u)$, $a(u)$ and $\kappa(u)$ are characterized by \eqref{21} and \eqref{22}.
\end{theorem}

We give an example of these surfaces. For this, let $a=0$ identically. Then \eqref{22} is trivially $0$ and \eqref{21} is 
$$\kappa  \left(r r''-r'^2\right)-r r' \kappa '=0,$$
or equivalently
$$\frac{rr''-r'^2}{rr'}=\frac{\kappa'}{\kappa}.$$
By solving, we deduce  that there is $m\in\r$, $m\not=0$, such that 
\begin{equation}\label{r3}
\frac{r'}{r}=m\kappa.
\end{equation}
Thus
$$r(u)=e^{m\int \kappa}.$$
In order to show an example, take $m=1$ and  
$$ r(u)=\frac{1}{\kappa(u)}.$$
From \eqref{r3}, we deduce 
$$r(u)=u,\quad \kappa(u)=\frac{1}{u},\quad u>0.$$
We now find the   parametrization \eqref{parame} of $\Gamma$. Since  $\Gamma$ is a planar curve,  after a linear isometry of $\r^3$, we can assume that $\Gamma$ is contained in a horizontal plane. Then the binormal vector is  $\B(u)=(0,0,1)$ and 
$$\Gamma'(u)=\T(u)=(\cos\theta(u),\sin\theta(u),0),$$
where 
$$\theta'(u)=\kappa(u)=\frac{1}{u}.$$
This gives $\theta(u)=\log(u)$, hence 
$$\N(u)=(-\sin(\log(u)),\cos(\log(u)),0).$$
Finally, the parametrization \eqref{parame} is 
\begin{equation}\label{new}
\begin{split}
\Psi(u,v)&=r(u)(\cos v\N(u)+\sin v\B(u)\\
&=\left(-u\sin(\log(u))\cos v,u\cos(\log(u))\cos v,u\sin v\right).
\end{split}
\end{equation}
A picture of this surface is given Fig. \ref{fig2}.
 \begin{figure}[hbtp]
\begin{center}
\includegraphics[width=.4\textwidth]{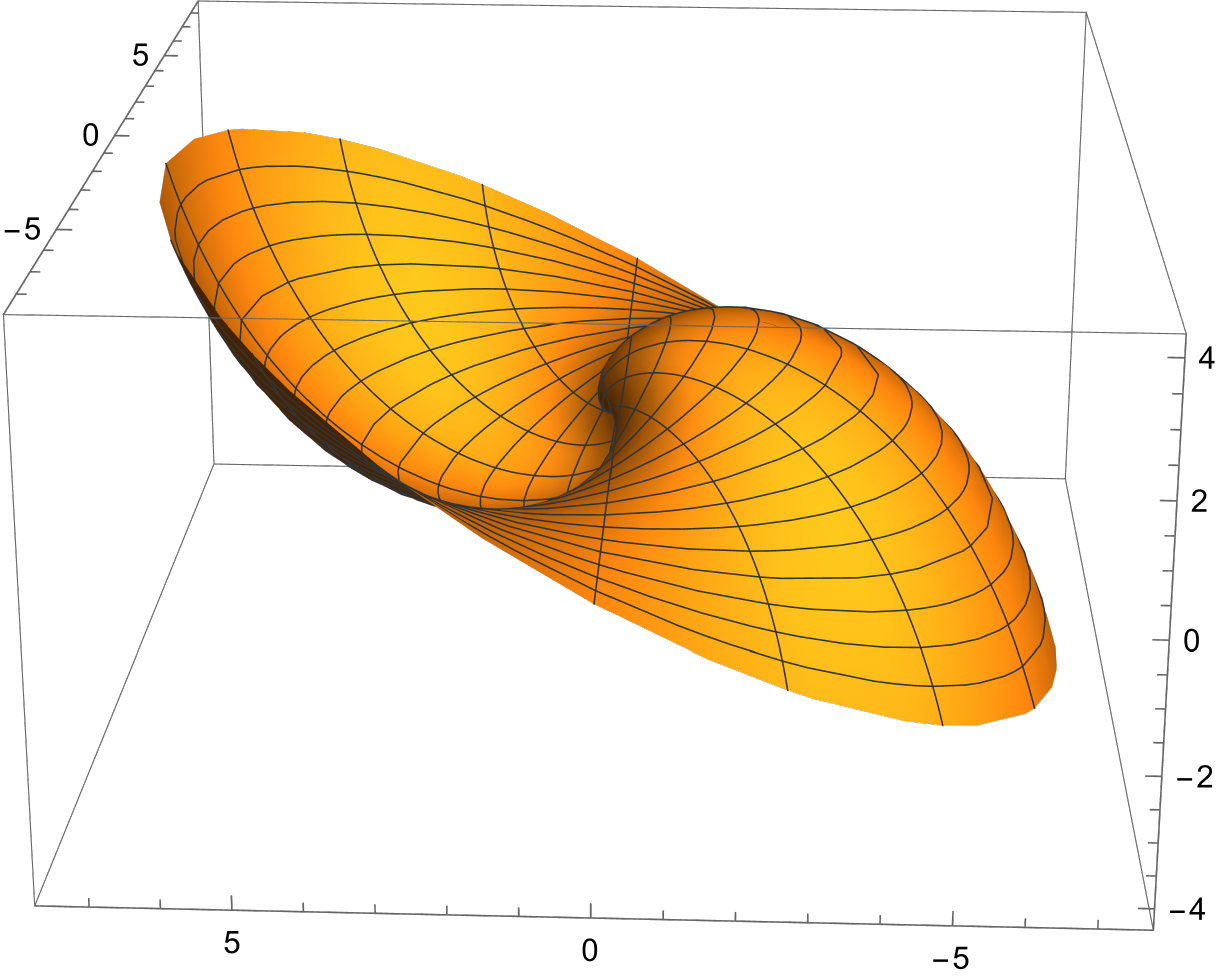},\quad \includegraphics[width=.4\textwidth]{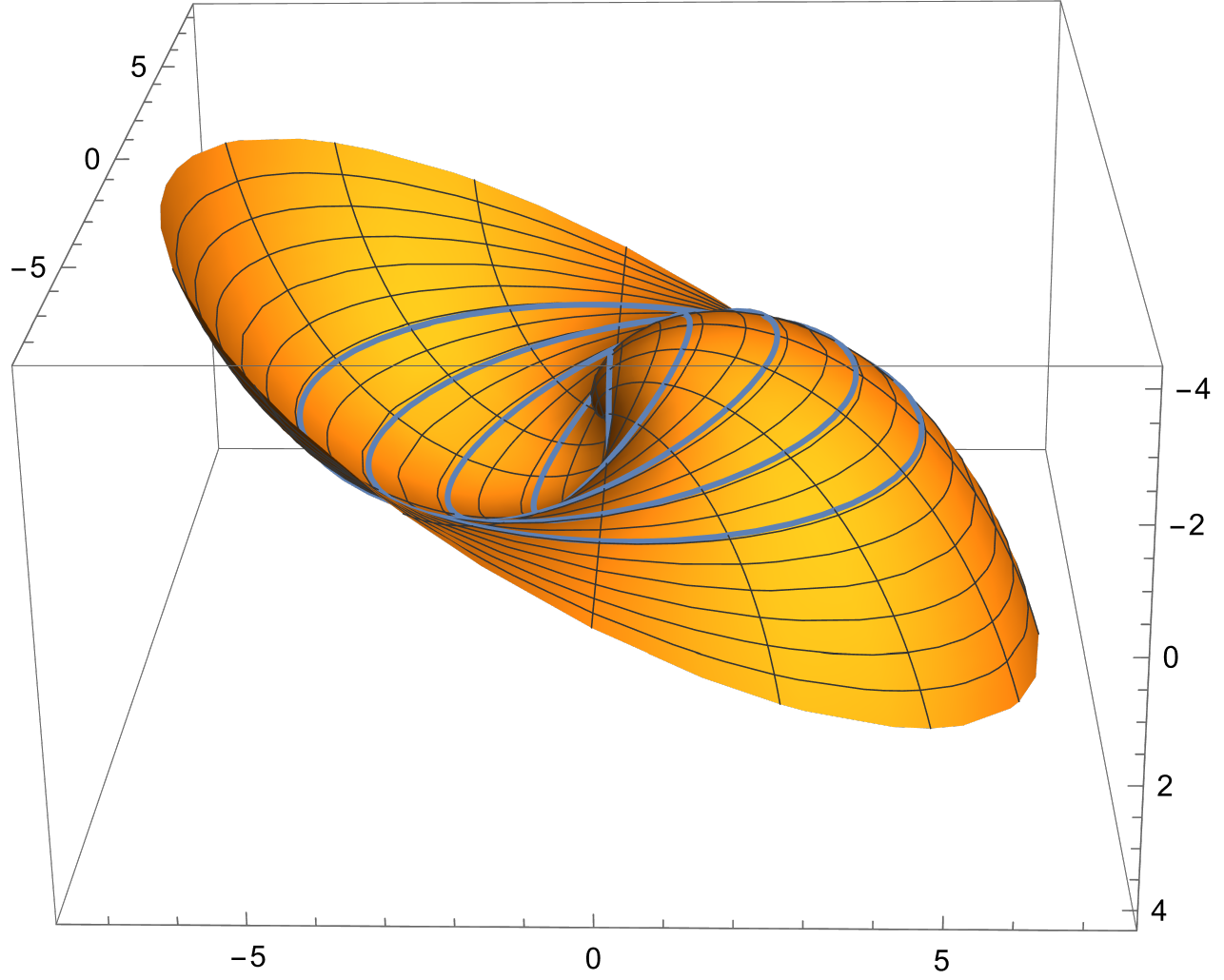}
\end{center}
\caption{ An example of a non-spherical $(-2)$-stationary surface foliated by circles in non-parallel planes. Left: surface parametrized by \eqref{new}. Right: some circles of the foliation.}\label{fig2}
\end{figure}
\section*{Acknowledgements}
The author   would like to thank the anonymous referee for their careful reading of the paper. Specially his/her contribution in improve    Thm. \ref{t1} with  the examples of ruled $\alpha$-stationary surfaces for $\alpha=1$.

The author has been partially supported by MINECO/MICINN/FEDER grant no. PID2023-150727NB-I00,  and by the ``Mar\'{\i}a de Maeztu'' Excellence Unit IMAG, reference CEX2020-001105- M, funded by MCINN/AEI/10.13039/ 501100011033/ CEX2020-001105-M.

 \section*{Affiliations}

\address{Department of Geometry and Topology\\ University of Granada. 18071 Granada, Spain.}\\
\email{rcamino@ugr.es}\\
\textbf{ORCID ID: 0000-0003-3108-7009 }


\begin{thebibliography}{99}
 

\bibitem{al} Abresch, U., Langer, J.:  \emph{The normalized curve shortening flow and homothetic solutions}. J. Differ. Geom. \textbf{23}, 175--196 (1986).

\bibitem{an} Anciaux, H.: \emph{Two non existence results for the self-similar equation in Euclidean 3-space}. J. Geom. \textbf{96}, 1--10 (2009).
\bibitem{ay}      Aydin,  M. E.,   Erdur Kara, A.:  \emph{Ruled singular minimal surfaces}. J. Geom. Phys. \textbf{195}, Paper No. 105055, 8 pp. (2024).

 \bibitem{ca}   Carath\'edory, C.:  \emph{Variationsrechnung und Partielle Differentialgleichungen erster Ordnung}. Teubner, Leipzig und Berlin (1935).


 
 \bibitem{d4}   Dierkes, U.: \emph{Minimal cones and a problem of Euler}.  Rend. Sem. Mat. Univ. Padova (2024), published online first.




\bibitem{dh}  Dierkes,  U.,   Huisken, G.:
\emph{The $n$-dimensional analogue of a variational problem of Euler}. Math. Ann. \textbf{389}, 3841--3863 (2024).


\bibitem{dl1} Dierkes, U., L\'opez, R.:  \emph{On the Plateau problem for surfaces with minimum moment of inertia}. To appear in Adv. Calc. Var. 

\bibitem{dl2} Dierkes, U., L\'opez, R.: \emph{Axisymmetric stationary surfaces for the moment of inertia}. arXiv:2507.12392 [math.DG] (2025).


\bibitem{eu}  Euler, L.: \emph{Methodus Inveniendi lineas curvas maximi minimive proprietate gaudentes sive solutio problematis isoperimetrici latissimo sensu accepti}, Lausanne et Genevae. (1744).

\bibitem{hi}  Hieu, D. T.,   Hoang, N. M.: \emph{Ruled minimal surfaces in with density $e^z$}. Pacific J. Math. \textbf{243},   277--285   (2009).
\bibitem{lo01} L\'opez, R.: \emph{Invariant surfaces in Euclidean space with a log-linear density}.  Adv. Math. \textbf{339}, 285--309 (2018) .
\bibitem{lo02}  L\'opez, R.: \emph{The one dimensional case of the singular minimal surfaces with density}. Geom. Dedic. \textbf{200}, 302--320  (2019).

\bibitem{lo03} L\'opez, R.: \emph{Ruled surfaces of generalized self-similar solutions of the mean curvature flow}. Mediterr. J. Math. \textbf{18}, Paper No. 197, 12 pp (2021).

\bibitem{lo1} L\'opez, R.: \emph{A connection between minimal surfaces and the two-dimensional analogues of a problem of Euler}.  Ann. Mat. Pura Appl. (2025). https://doi.org/10.1007/s10231-025-01593-w.

\bibitem{lo2}   L\'opez, R.: \emph{Stationary surfaces for   the moment of  inertia with constant Gauss curvature}.  Analysis (2025). https://doi.org/10.1515/anly-2025-0037.

\bibitem{ma}   Mason, M.: \emph{Curves of minimum moment of inertia with respect to a point}. Annals of Math. \textbf{7 }, 165--172 (1906). 

\bibitem{ni}   Nitsche, J. C. C.: \emph{Cyclic surfaces of constant mean curvature}.  Nachr. Akad. Wiss. Gottingen Math. Phys. II {\bf 1}, 1--5  (1989).



\bibitem{to}   Tonelli, L.: \emph{Fondamenti di calcolo della variazioni}. \textbf{1} (1921).


\end{thebibliography}
 \end{document}